\newtheorem{theorem}{Theorem}
\newtheorem{lemma}[theorem]{Lemma}
\newtheorem{definition}[theorem]{Definition}
\title{A $q$-Digital Binomial Theorem}
\author{Toufik Mansour}
\author{Hieu D. Nguyen}
    \address{Department of Mathematics, University of Haifa, 31905 Haifa, Israel}
    \email{toufik@math.haifa.ac.il}
    \address{Department of Mathematics, Rowan University, Glassboro, NJ, USA}
    \email{nguyen@rowan.edu}
\date{6-25-2015}
\begin{document}

\subjclass[2010]{Primary 11B65; Secondary 28A80 }
\keywords{binomial theorem, Sierpi\'nski matrix, sum-of-digits, $q$-binomial coefficient}

\maketitle

\begin{abstract}

We present a multivariable generalization of the digital binomial theorem from which a $q$-analog is derived as a special case.

\end{abstract}

\section{Introduction}
\label{sec:1}

The classical binomial theorem is an important fundamental result in mathematics:
\[
(x+y)^n=\sum_{k=0}^n \binom{n}{k}x^k y^{n-k}.
\]
In 2006, Callan \cite{C} found a digital version of the binomial theorem (see also \cite{N1}) and one of us \cite{N2} recently found a non-binary extension of Callan's result.  

\begin{theorem}[digital binomial theorem \cite{C, N1}] \label{th:digital-binomial-theorem}
 Let $n$ be a non-negative integer.  Then
\begin{equation} \label{eq:digital-binomial-theorem-carry-free}
(x+y)^{s(n)}  = \sum_{\substack{0\leq m \leq n \\  (m,n-m) \ 
\text{\rm carry-free}}}x^{s(m)}y^{s(n-m)}.
\end{equation}
\end{theorem}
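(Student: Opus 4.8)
The plan is to reduce \eqref{eq:digital-binomial-theorem-carry-free} to the classical binomial theorem by reading the carry-free condition combinatorially. Write the binary expansion $n=\sum_{i\ge 0}\epsilon_i 2^i$ with each $\epsilon_i\in\{0,1\}$, and let $S=\{i:\epsilon_i=1\}$, so that the binary sum-of-digits is $s(n)=|S|$. The first key step is to show that, for $0\le m\le n$, the pair $(m,n-m)$ is carry-free --- i.e.\ the schoolbook base-$2$ addition of $m$ and $n-m$ produces no carries --- exactly when the binary digits of $m$ are dominated digitwise by those of $n$; equivalently, when there is a subset $T\subseteq S$ with $m=\sum_{i\in T}2^i$, in which case necessarily $n-m=\sum_{i\in S\setminus T}2^i$.

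The second step records the consequences. For such $m$ we get $s(m)=|T|$ and $s(n-m)=|S|-|T|$, and the assignment $m\mapsto T$ is a bijection from the set of carry-free pairs $(m,n-m)$ onto the power set of $S$. Substituting into the right-hand side of \eqref{eq:digital-binomial-theorem-carry-free} yields $\sum_{T\subseteq S}x^{|T|}y^{|S|-|T|}$, and grouping the subsets $T$ by cardinality turns this into $\sum_{k=0}^{|S|}\binom{|S|}{k}x^{k}y^{|S|-k}$. By the classical binomial theorem this equals $(x+y)^{|S|}=(x+y)^{s(n)}$, which is the claim.

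The main obstacle is the carry analysis in the first step. I expect to handle it either by induction on the number of binary digits of $n$, peeling off the lowest bit, or more directly in the spirit of Kummer's theorem: a carry is generated at some position precisely when the digit sum there exceeds $1$, so no carry ever occurs iff the $i$-th binary digits of $m$ and $n-m$ satisfy $\epsilon_i^{(m)}+\epsilon_i^{(n-m)}\le 1$ for all $i$, i.e.\ iff the supports of $m$ and $n-m$ are disjoint; combining disjointness with the identity $m+(n-m)=n$ (now performed with no carries, hence digitwise) forces these supports to partition $S$. Once this equivalence is in hand, the remainder of the proof is a routine rewriting.
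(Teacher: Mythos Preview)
Your argument is correct: the bijection between carry-free pairs $(m,n-m)$ and subsets $T\subseteq S$ of the binary support of $n$ is exactly the right way to see the identity directly, and your justification via the digitwise equation $m_i+(n-m)_i=n_i\in\{0,1\}$ (valid precisely in the absence of carries) is sound.

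This is, however, a genuinely different route from the paper's. The paper does not prove Theorem~\ref{th:digital-binomial-theorem} in isolation; it is stated as a known result and then recovered as the specialization $b=2$, $x_i=x$, $y_i=y$, $r_i=1$ (equivalently $q=1$ in Theorem~\ref{th:q-digital-binomial-theorem}) of the multivariable Theorem~\ref{th:digital-binomial-theorem-multivariable}. That theorem in turn is proved by building multivariable Sierpi\'nski matrices via Kronecker products and showing they multiply according to $S_{b,N}(\mathbf{x}_N,\mathbf{r}_N)S_{b,N}(\mathbf{y}_N,\mathbf{r}_N)=S_{b,N}(\mathbf{x}_N+\mathbf{y}_N,\mathbf{r}_N)$, the base case being the Chu--Vandermonde convolution. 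Your approach is far more elementary and exposes the combinatorial core of the identity with no extra machinery; the paper's approach costs more up front but delivers the full multivariable and $q$-analog generalizations in one stroke, something your subset bijection does not obviously extend to once the digits can exceed $1$ or the variables vary with position.
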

\noindent In (\ref{eq:digital-binomial-theorem-carry-free}), $s(m)$ denotes the binary sum-of-digits function.  Moreover, a pair of non-negative integers $(j,k)$ is said to be carry-free if their sum in binary involves no carries.

In this paper, we establish a $q$-analog of the digital binomial theorem:

\begin{theorem}[$q$-digital binomial theorem] \label{th:q-digital-binomial-theorem}
 Let $n$ be a non-negative integer with binary expansion $n=n_{N-1}2^{N-1}+\cdots +n_02^0$.  Then
\begin{equation} \label{eq:q-digital-binomial-theorem}
 \prod_{i=0}^{N-1}\binom{x+q^i y+ n_i-1}{n_i} = \sum_{\substack{0\leq m \leq n \\  (m,n-m) \
\text{\rm carry-free}}}q^{z_{n}(m)}x^{s(m)}y^{s(n-m)}.
\end{equation}
\end{theorem}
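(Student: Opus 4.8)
The plan is to prove (\ref{eq:q-digital-binomial-theorem}) directly by expanding its left-hand side, the point being that every binary digit $n_i$ is either $0$ or $1$. First I would note that for $n_i\in\{0,1\}$ one has $\binom{x+q^iy+n_i-1}{n_i}=(x+q^iy)^{n_i}$ (the value is $1$ when $n_i=0$ and $x+q^iy$ when $n_i=1$). Hence the left-hand side collapses to the finite product $\prod_{i\in I}(x+q^iy)$, where $I=\{i:n_i=1\}$ is the set of positions of the $1$-digits of $n$, so that $|I|=s(n)$.

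Next I would expand this product over subsets: $\prod_{i\in I}(x+q^iy)=\sum_{J\subseteq I}x^{|I\setminus J|}\,q^{\sum_{i\in J}i}\,y^{|J|}$, where $J$ records the factors from which $q^iy$ is chosen. The decisive bookkeeping step is to reindex the sum by the integer $m=\sum_{i\in I\setminus J}2^i$. As $J$ ranges over all subsets of $I$, the integer $m$ ranges over exactly those $m$ whose binary support is contained in that of $n$, which is precisely the set of $m$ with $0\le m\le n$ for which $(m,n-m)$ is carry-free. Under this correspondence $s(m)=|I\setminus J|$, while $n-m=\sum_{i\in J}2^i$ gives $s(n-m)=|J|$ and makes the recorded exponent $\sum_{i\in J}i$ equal to the sum of the positions of the $1$-digits of $n-m$, i.e.\ $z_n(m)$. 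Substituting turns the product into $\sum q^{z_n(m)}x^{s(m)}y^{s(n-m)}$ taken over carry-free pairs $(m,n-m)$, which is the right-hand side of (\ref{eq:q-digital-binomial-theorem}).

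I expect the only steps needing genuine care (rather than routine algebra) to be: (i) confirming the equivalence ``$(m,n-m)$ carry-free and $0\le m\le n$'' $\Longleftrightarrow$ ``the binary support of $m$ lies inside that of $n$'', which is what forces the index sets on the two sides to coincide; and (ii) checking that the power of $q$ produced by the expansion agrees with $z_n(m)$ in the form in which it was defined. Once these are in place the identity is a term-by-term comparison. (Alternatively, one can argue by induction on $N$: the inductive hypothesis handles $\prod_{i=0}^{N-2}\binom{x+q^iy+n_i-1}{n_i}$, and multiplying by the remaining factor $(x+q^{N-1}y)^{n_{N-1}}$ either does nothing, when $n_{N-1}=0$, or splits each surviving term according to whether the top bit is assigned to $m$ or to $n-m$, shifting the exponent of $q$ by $0$ or by $N-1$ respectively, which reproduces exactly the new terms on the right.)
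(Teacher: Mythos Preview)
Your argument is correct. The key simplification $\binom{x+q^iy+n_i-1}{n_i}=(x+q^iy)^{n_i}$ for $n_i\in\{0,1\}$ is valid, the subset expansion is standard, and your bijection $J\leftrightarrow m=\sum_{i\in I\setminus J}2^i$ gives exactly the carry-free condition $m\preceq_2 n$. For point (ii), note that with $m\preceq_2 n$ the positions where $n_k\neq m_k$ are precisely those with $n_k=1$, $m_k=0$, i.e.\ $k\in J$; hence $z_n(m)=\sum_{k=0}^{N-1}k(1-\delta(n_k,m_k))=\sum_{k\in J}k$ as you claim.

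This is, however, a genuinely different route from the paper's. The paper first proves the multivariable identity (Theorem~\ref{th:digital-binomial-theorem-multivariable}) for an arbitrary base $b$ and arbitrary parameters $x_i,y_i,r_i$, by realizing both sides as entries of a product of generalized Sierpi\'nski matrices and establishing the multiplicative law $S_{b,N}(\mathbf{x}_N,\mathbf{r}_N)S_{b,N}(\mathbf{y}_N,\mathbf{r}_N)=S_{b,N}(\mathbf{x}_N+\mathbf{y}_N,\mathbf{r}_N)$ via the Kronecker-product recursion and Chu--Vandermonde. Theorem~\ref{th:q-digital-binomial-theorem} is then obtained by specializing $b=2$, $x_i=x$, $y_i=q^iy$, $r_i=1$. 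Your direct approach is much shorter and entirely elementary, but it hinges on $n_i\in\{0,1\}$ to collapse each generalized binomial to a linear factor; it therefore does not extend to the non-binary Theorem~\ref{th:digital-binomial-theorem-multivariable}, which is what the paper's matrix machinery is really designed to deliver.
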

\noindent In (\ref{eq:q-digital-binomial-theorem}), $z_n(m)$ is a function that counts (in a weighted manner) those digits in the $N$-bit binary expansion of $m$ that are less than the corresponding digits of $n$ (see Definition \ref{de:digit-function}). 

In the special case where $n=2^N-1$, equation (\ref{eq:q-digital-binomial-theorem}) simplifies to
\begin{equation} \label{eq:q-digital-binomial-theorem-special-case}
(x+y)(x+qy)\cdots (x+q^{N-1}y)  = \sum_{m=0}^nq^{z_{n}(m)}x^{s(m)}y^{s(n-m)}.
\end{equation}
Observe that (\ref{eq:q-digital-binomial-theorem-special-case}) is a variation of the well-known terminating $q$-binomial theorem attributed to H. A. Rothe \cite{R} (see \cite[Cor. 10.2.2(c), p. 490]{AAR} or \cite[Eq. 1.13, p. 15]{As}):
\begin{equation} \label{eq:q-binomial}
(x+y)(x+qy)\cdots (x+q^{N-1}y)  = \sum_{k=0}^N q^{k(k-1)/2}\binom{N}{k}_q x^{N-k}y^k,
\end{equation}
where $\binom{N}{k}_q$ denotes the $q$-binomial coefficient given by (see \cite[Def. 3.1, p. 35]{A} or \cite[Exercise 1.2, p. 24]{GR})
\[
\binom{N}{k}_q=\frac{(1-q^N)(1-q^{N-1})\cdots (1-q^{N-k+1})}{(1-q)(1-q^2)\cdots (1-q^k)}
\]
for $0\leq k \leq N$.  As an application, we set $x=1$ in both (\ref{eq:q-digital-binomial-theorem-special-case}) and (\ref{eq:q-binomial}) and equate coefficients corresponding to like powers of $y$ to obtain the following formula for $q$-binomial coefficients:
\begin{theorem} For any $0\leq k \leq N$,
\begin{equation} \label{eq:q-binomial-formula}
q^{k(k-1)/2}\binom{N}{k}_q = \sum_{\substack{0\leq m \leq n \\  s(m)=k}} q^{z_{n}(n-m)}.
\end{equation}
\end{theorem}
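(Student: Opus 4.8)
The plan is to derive (\ref{eq:q-binomial-formula}) by comparing the two polynomial identities (\ref{eq:q-digital-binomial-theorem-special-case}) and (\ref{eq:q-binomial}) in the single variable $y$ after specializing $x=1$. First I would fix $n=2^N-1$, whose $N$-bit binary expansion $n_{N-1}\cdots n_0$ consists entirely of $1$'s. For such $n$, every integer $m$ with $0\leq m\leq n$ automatically forms a carry-free pair $(m,n-m)$, since the binary digits of $n-m$ are precisely the complements of those of $m$; in particular $s(m)+s(n-m)=N$ for all such $m$. Thus the hypothesis of (\ref{eq:q-digital-binomial-theorem-special-case}) is met, and the sum there may be taken over all $m$ in $\{0,1,\dots,n\}$ without further restriction.

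Next I would set $x=1$ in (\ref{eq:q-digital-binomial-theorem-special-case}) to obtain
\[
(1+y)(1+qy)\cdots(1+q^{N-1}y)=\sum_{m=0}^{n}q^{z_{n}(m)}y^{s(n-m)},
\]
and set $x=1$ in (\ref{eq:q-binomial}) to obtain
\[
(1+y)(1+qy)\cdots(1+q^{N-1}y)=\sum_{k=0}^{N}q^{k(k-1)/2}\binom{N}{k}_q y^{k}.
\]
Both right-hand sides are polynomials in $y$ of degree $N$, so I may equate the coefficients of $y^{k}$ for each $k$ with $0\leq k\leq N$. On the digital side, the coefficient of $y^{k}$ is the sum of $q^{z_{n}(m)}$ over all $m$ in $\{0,1,\dots,n\}$ satisfying $s(n-m)=k$.

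Finally I would reindex this sum via the involution $m\mapsto n-m$ of $\{0,1,\dots,n\}$: writing $m'=n-m$, the constraint $s(n-m)=k$ becomes $s(m')=k$ and the exponent $z_{n}(m)$ becomes $z_{n}(n-m')$. Renaming $m'$ back to $m$ then gives
\[
q^{k(k-1)/2}\binom{N}{k}_q=\sum_{\substack{0\leq m\leq n\\ s(m)=k}}q^{z_{n}(n-m)},
\]
which is exactly (\ref{eq:q-binomial-formula}). I do not expect a genuine obstacle in this argument; the only two points requiring care are verifying that the carry-free condition is vacuous when $n=2^N-1$ (so that (\ref{eq:q-digital-binomial-theorem-special-case}) indeed applies in the stated form) and checking that the change of summation variable transports both the index constraint and the exponent $z_n$ correctly.
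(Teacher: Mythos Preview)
Your proposal is correct and follows exactly the approach sketched in the paper: set $x=1$ in both (\ref{eq:q-digital-binomial-theorem-special-case}) and (\ref{eq:q-binomial}) and equate coefficients of like powers of $y$. You have supplied the details the paper leaves implicit, namely that the carry-free condition is vacuous when $n=2^N-1$ and that the reindexing $m\mapsto n-m$ converts the constraint $s(n-m)=k$ and exponent $z_n(m)$ into the stated form.
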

For example, if $N=3$ so that $n=2^N-1=7$, then we obtain from (\ref{eq:q-binomial-formula}) known values for the $q$-binomial coefficients
\begin{align*}
\binom{3}{0}_q & = q^{z_7(7)} =1 \\
\binom{3}{1}_q & = q^{z_7(6)}+q^{z_7(5)}+q^{z_7(3)} = 1+q+ q^2 \\
\binom{3}{2}_q & = \frac{1}{q}(q^{z_7(4)}+q^{z_7(2)}+q^{z_7(1)})  = \frac{1}{q}(q+q^2+q^3) = 1+q+q^2 \\
\binom{3}{3}_q & = \frac{1}{q^3}\cdot q^{z_7(0)}=1.
\end{align*}

Other identities can be derived from (\ref{eq:q-digital-binomial-theorem-special-case}) as follows:

\begin{enumerate}[1.]
\item Setting $x=y=1$, we obtain
\[
\sum_{m=0}^n q^{z_{n}(m)}= 2(1+q)\cdots (1+q^{N-1}).
\]

\item Differentiating respect to $x$ and setting $x=y=1$, we obtain
\[
\sum_{m=0}^n s(m)q^{z_{n}(m)}= 2(1+q)\cdots (1+q^{N-1})\sum_{j=0}^{N-1}\frac{1}{1+q^i}.
\]
Then setting $q=1$ gives
\[
\sum_{m=0}^n s(m)= N\cdot 2^{N-1}.
\]

\item Differentiating respect to $q$ and setting $q=1$, we obtain
\[
\sum_{m=0}^nz_m(m)x^{s(m)}y^{s(n-m)} = \binom{N}{2} y(x+y)^{N-1}.
\]
\end{enumerate}

We shall derive Theorem \ref{th:q-digital-binomial-theorem} as a special case of the following multivariable generalization of the digital binomial theorem, which extends the main result in \cite{N2}.  Our notation for the generalized binomial coefficients appearing in (\ref{eq:digital-binomial-theorem-multivariable}) is given in Definition \ref{de:generalized-binomial-multivariable}.  

\begin{theorem} \label{th:digital-binomial-theorem-multivariable}
 Let $n$ be a non-negative integer with base $b$ expansion $n=n_{N-1}b^{N-1}+\cdots +n_0b^0$.  Then
\begin{equation} \label{eq:digital-binomial-theorem-multivariable}
\prod_{i=0}^{N-1}\binom{x_i+y_i;r_i}{n_i} =\sum_{0\leq m\preceq_b n}\left(\prod_{i=0}^{N-1}\binom{x_i;r_i}{m_i} \prod_{i=0}^{N-1}\binom{y_i;r_i}{n_i-m_i} \right),
\end{equation}
where $m=m_{N-1}b^{N-1}+\cdots +m_0b^0$ and $m\preceq_b n$ denotes the fact that each digit of $m$ is less or equal to each corresponding digit of $n$ in base $b$ (see Definition \ref{de:digital-dominance}).
\end{theorem}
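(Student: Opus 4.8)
The plan is to reduce Theorem~\ref{th:digital-binomial-theorem-multivariable} to a single one-variable Vandermonde-type convolution for the generalized binomial coefficient of Definition~\ref{de:generalized-binomial-multivariable}, and then to take a product over the digit positions. Concretely, the first step is to record (or re-derive) the identity
\[
\binom{x+y;r}{k}=\sum_{j=0}^{k}\binom{x;r}{j}\binom{y;r}{k-j}
\]
for every non-negative integer $k$. I would prove this by a generating-function argument: the numbers $\binom{x;r}{k}$ are, up to the obvious rescaling by the parameter $r$, ordinary (upper-negative) binomial coefficients, so $\sum_{k\ge 0}\binom{x;r}{k}t^{k}$ is a binomial series and the displayed convolution is just the coefficient of $t^{k}$ in the functional equation obtained by splitting that series at the exponent $x+y$ into the product of the series at $x$ and at $y$; equivalently, after the substitution that turns $\binom{x;r}{j}$ into an ordinary $\binom{-x/r}{j}$ (with a sign), it is literally the classical Vandermonde convolution. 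If this convolution is already available from \cite{N2}, one simply invokes it.

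\textbf{From one factor to the product.} Second, I would apply the above convolution separately to each of the $N$ factors on the left-hand side of \eqref{eq:digital-binomial-theorem-multivariable}:
\[
\prod_{i=0}^{N-1}\binom{x_i+y_i;r_i}{n_i}
=\prod_{i=0}^{N-1}\left(\sum_{m_i=0}^{n_i}\binom{x_i;r_i}{m_i}\binom{y_i;r_i}{n_i-m_i}\right),
\]
and then expand this product of $N$ finite sums into a single sum over all tuples $(m_0,m_1,\dots,m_{N-1})$ subject to $0\le m_i\le n_i$ for every $i$, regrouping the two families of factors so that each term reads $\prod_{i}\binom{x_i;r_i}{m_i}\,\prod_{i}\binom{y_i;r_i}{n_i-m_i}$.

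\textbf{Identifying the index set.} Third, I would translate the index set. Since $0\le n_i\le b-1$ forces $0\le m_i\le b-1$, every admissible tuple $(m_0,\dots,m_{N-1})$ is the sequence of base-$b$ digits of a unique integer $m=\sum_{i=0}^{N-1}m_ib^{i}$, and the constraints $0\le m_i\le n_i$ for all $i$ are, by Definition~\ref{de:digital-dominance}, exactly the condition $0\le m\preceq_b n$; moreover $0\le n_i-m_i\le b-1$, so $n-m=\sum_{i=0}^{N-1}(n_i-m_i)b^{i}$ and the integers $n_i-m_i$ appearing on the right are precisely the base-$b$ digits of $n-m$. Substituting this reindexing yields exactly the right-hand side of \eqref{eq:digital-binomial-theorem-multivariable}, completing the proof. (Alternatively one may run the argument by induction on the number of digits $N$, peeling off the factor for $i=N-1$ and combining the one-variable convolution with the induction hypothesis, but the direct expansion seems cleaner.)

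\textbf{Where the work is.} There is no deep obstacle here: the whole argument is ``Vandermonde plus bookkeeping.'' The two points that need care are the combinatorial step of the previous paragraph — that expanding the product of the digitwise sums produces exactly the sum over $m\preceq_b n$, with $n-m$ automatically carrying the complementary digits $n_i-m_i$ — and, if it is not simply quoted from \cite{N2}, a clean derivation of the one-variable convolution for $\binom{x;r}{k}$. Everything else is routine manipulation of finite products and sums.
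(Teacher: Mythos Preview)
Your argument is correct, but it takes a different route from the paper. The paper proves Theorem~\ref{th:digital-binomial-theorem-multivariable} by first building a family of lower-triangular ``Sierpi\'nski matrices'' $S_{b,N}(\mathbf{x}_N,\mathbf{r}_N)$ whose $(j,k)$-entry is $\prod_i\binom{x_i;r_i}{d_i}$ (with $d_i$ the base-$b$ digits of $j-k$ when $k\preceq_b j$), establishing a Kronecker-product recursion $S_{b,N+1}=S_{b,1}\otimes S_{b,N}$, and then proving the multiplicative law $S_{b,N}(\mathbf{x}_N,\mathbf{r}_N)S_{b,N}(\mathbf{y}_N,\mathbf{r}_N)=S_{b,N}(\mathbf{x}_N+\mathbf{y}_N,\mathbf{r}_N)$ by induction on $N$ via the mixed-product property; equation~\eqref{eq:digital-binomial-theorem-multivariable} then drops out by reading off the $(n,0)$ entry on each side. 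The one-variable Vandermonde convolution you start from is exactly what the paper uses as the base case $N=1$ of that induction (their Lemma on $\sum_{v=q}^{p}\binom{x;r}{p-v}\binom{y;r}{v-q}=\binom{x+y;r}{p-q}$), so the analytic content is the same. What you gain is directness: your ``apply Vandermonde digitwise, expand the product of sums, reindex by $m\preceq_b n$'' avoids the entire matrix apparatus. What the paper's route buys is an ambient algebraic structure --- the Sierpi\'nski matrices form a one-parameter group under this product law --- which is of independent interest and ties the identity to the self-similar (Kronecker-product) structure of Sierpi\'nski's triangle, even if it is heavier than needed for the bare statement of the theorem.
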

\noindent  In particular, Theorem \ref{th:q-digital-binomial-theorem} now follows by setting $b=2$, $x_i=x$, $y_i=q^i y$, and $r_i=1$ in (\ref{eq:digital-binomial-theorem-multivariable}).

By making the substitutions $x_i=p^ix$, $y_i=q^iy$, and $r_i=r$ in  (\ref{eq:digital-binomial-theorem-multivariable}), we also obtain the following three-parameter version of the digital binomial theorem.
\begin{theorem}
 Let $n$ be a non-negative integer with base $b$ expansion $n=n_{N-1}b^{N-1}+\cdots +n_0b^0$.  Then
\begin{equation} \label{eq:digital-binomial-theorem-non-binary}
\prod_{i=0}^{N-1}\binom{p^ix+q^iy;r}{n_i} =\sum_{0\leq m\preceq_b n}\left( \prod_{i=0}^{N-1}\binom{p^ix;r}{m_i} \prod_{i=0}^{N-1}\binom{q^iy;r}{n_i-m_i} \right),
\end{equation}
where $m=m_{N-1}b^{N-1}+\cdots +m_0b^0$.
\end{theorem}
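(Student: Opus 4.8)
The plan is to obtain this statement as a direct specialization of Theorem \ref{th:digital-binomial-theorem-multivariable}. That theorem is stated with the quantities $x_0,\dots,x_{N-1}$, $y_0,\dots,y_{N-1}$, $r_0,\dots,r_{N-1}$ appearing as free parameters, and since each generalized binomial coefficient $\binom{x;r}{m}$ is, by Definition \ref{de:generalized-binomial-multivariable}, a fixed polynomial expression in $x$ and $r$, identity (\ref{eq:digital-binomial-theorem-multivariable}) continues to hold under any substitution of specific values for these parameters. In particular we are free to let the $x_i$ and $y_i$ depend on $i$ in any prescribed way.

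First I would fix a base $b$ and a non-negative integer $n$ with base-$b$ expansion $n=n_{N-1}b^{N-1}+\cdots+n_0b^0$, and in (\ref{eq:digital-binomial-theorem-multivariable}) make the replacements $x_i=p^ix$, $y_i=q^iy$, and $r_i=r$ for every $i=0,\dots,N-1$. The index set of the sum on the right-hand side, namely $\{m:0\le m\preceq_b n\}$, depends only on $n$ and $b$ (see Definition \ref{de:digital-dominance}) and is therefore untouched by the substitution, so the summation range in the resulting identity is exactly the one appearing in (\ref{eq:digital-binomial-theorem-non-binary}).

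Next I would simply read off both sides. The $i$th factor on the left becomes $\binom{x_i+y_i;r_i}{n_i}=\binom{p^ix+q^iy;r}{n_i}$, giving the product $\prod_{i=0}^{N-1}\binom{p^ix+q^iy;r}{n_i}$. On the right, the two products transform into $\prod_{i=0}^{N-1}\binom{x_i;r_i}{m_i}=\prod_{i=0}^{N-1}\binom{p^ix;r}{m_i}$ and $\prod_{i=0}^{N-1}\binom{y_i;r_i}{n_i-m_i}=\prod_{i=0}^{N-1}\binom{q^iy;r}{n_i-m_i}$, where $m=m_{N-1}b^{N-1}+\cdots+m_0b^0$. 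Collecting these yields precisely (\ref{eq:digital-binomial-theorem-non-binary}).

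Since the argument is pure substitution, there is no obstacle of real substance; the only point that requires a moment's care is confirming that Theorem \ref{th:digital-binomial-theorem-multivariable} holds with no algebraic-independence hypothesis on the $x_i,y_i$ — which, as noted above, is automatic because its two sides are literally equal polynomials in those variables — so that collapsing the $2N$ quantities $x_i,y_i$ down to the expressions $p^ix,q^iy$ in the three parameters $x,y,r$ is legitimate.
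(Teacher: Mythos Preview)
Your proof is correct and matches the paper's approach exactly: the paper derives this theorem by making the substitutions $x_i=p^ix$, $y_i=q^iy$, and $r_i=r$ in Theorem~\ref{th:digital-binomial-theorem-multivariable}, which is precisely what you do. Your additional remark that the identity is a polynomial identity, and hence stable under substitution, is a welcome clarification but not strictly needed.
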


The proof of Theorem \ref{th:digital-binomial-theorem-multivariable} will be given in Section \ref{sec:2}, where we develop a multivariable generalization of the Sierpi\'nski matrix and use its multiplicative property to derive (\ref{eq:digital-binomial-theorem-multivariable}).

\section{Multivariable Sierpinski Matrices}
\label{sec:2}

We begin with some preliminary definitions and assume throughout this paper that $b$ is an integer greater than 1.  Our first definition involves the notion of digital dominance (see \cite{BEJ, N2}).

\begin{definition} \label{de:digital-dominance}
Let $m$ and $n$ be non-negative integers with base $b$ expansions $m=m_{N-1}b^{N-1}+\cdots +m_0b^0$ and $n=n_{N-1}b^{N-1}+\cdots +n_0b^0$, respectively.  We denote $m\preceq_b n$ to mean that $m$ is \textrm{digitally} less than $n$ in base $b$, i.e., $m_k\leq n_k$ for all $k=0,\ldots,N-1$.
\end{definition}

Observe that $m$ digitally less than $n$ is equivalent to the pair $(m,n-m)$ being carry-free, i.e., the sum of $m$ and $n-m$ involves no carries when performed in base $b$.  This is also equivalent to the equality (see \cite{BEJ})
\[
s_b(m)+s_b(n-m)=s_b(n),
\]
where $s_b(m)$ is the base $b$ sum-of-digits function.

\begin{definition} \label{de:digit-function}
Let $m$ and $n$ be non-negative integers and assume $m\preceq_b n$.  We define
\begin{equation}
z_n(m;b)=\sum_{k=0}^{N-1}k(1-\delta(n_k,m_k)),
\end{equation}
where $\delta$ is the Kronecker delta function: $\delta(i,j)=1$ if $i=j$; otherwise, $\delta(i,j)=0$.
\end{definition}

\noindent NOTE: If $b=2$ (binary), we denote $z_n(m):=z_n(m;2)$. \\

\begin{definition}
\label{de:generalized-binomial-multivariable} We define the generalized binomial coefficient $\binom{x;r}{d}$ by
\begin{align}
\binom{x;r}{d} & =\frac{x(x+r)\cdots (x+(d-1)r)}{d!},
\end{align}
where we set
\[
\binom{x;r}{0}=1.
\]
\end{definition}
\noindent Observe that if $r=1$, then $\binom{x;1}{d}$ gives the ordinary binomial coefficient $\binom{x+d-1}{d}$. \\

The following identity will be useful later in our paper.

\begin{lemma}\label{le:binomial-sum-formula} Let $p$ and $q$ be non-negative integers with $q \leq p$.  Then
\begin{equation}\label{eq:binomial-sum-formula}
\sum_{v=q}^{p} \binom{x;r}{p-v} \binom{y;r}{v-q} =\binom{x+y;r}{p-q} .
\end{equation}
\end{lemma}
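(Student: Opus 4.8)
The plan is to recognize (\ref{eq:binomial-sum-formula}) as a Vandermonde-type convolution. First I would introduce the new summation index $j=v-q$ and write $n=p-q$; then (\ref{eq:binomial-sum-formula}) takes the equivalent form
\[
\sum_{j=0}^{n}\binom{x;r}{n-j}\binom{y;r}{j}=\binom{x+y;r}{n}.
\]
It is enough to establish this for $r\neq 0$. Indeed, for each fixed $n$ both sides are polynomials in $x$, $y$, and $r$ — by Definition \ref{de:generalized-binomial-multivariable}, $\binom{x;r}{d}$ is a polynomial of degree $d$ in $x$ and $r$ jointly — so an identity that holds for all $r\neq 0$ holds identically, and in particular at $r=0$.

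For $r\neq 0$ I would use the rewriting
\[
\binom{x;r}{d}=r^{d}\binom{x/r+d-1}{d},
\]
which is immediate from Definition \ref{de:generalized-binomial-multivariable}. Substituting this into the displayed identity, the factor $r^{n}$ cancels from both sides and the claim reduces to the classical Chu--Vandermonde convolution
\[
\sum_{j=0}^{n}\binom{x/r+n-j-1}{n-j}\binom{y/r+j-1}{j}=\binom{x/r+y/r+n-1}{n},
\]
which one reads off as the coefficient of $t^{n}$ in the formal power series identity $(1-t)^{-x/r}(1-t)^{-y/r}=(1-t)^{-(x+y)/r}$, using $\sum_{d\geq 0}\binom{a+d-1}{d}t^{d}=(1-t)^{-a}$.

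The argument is essentially routine; the only point that calls for a little care is the excluded value $r=0$, which is why I would run the main computation under the hypothesis $r\neq 0$ and then invoke the polynomial-identity principle. One can also check directly that at $r=0$ the identity collapses to $\sum_{j}\frac{x^{n-j}}{(n-j)!}\cdot\frac{y^{j}}{j!}=\frac{(x+y)^{n}}{n!}$, i.e.\ the ordinary binomial theorem. A self-contained alternative that avoids any case distinction is a short induction on $n=p-q$: the case $n=0$ is immediate, and the inductive step uses the additive recurrence $\binom{x;r}{d}=\binom{x-r;r}{d}+r\binom{x;r}{d-1}$ — verified by a one-line computation from the definition — together with reindexing; I would keep the generating-function proof as the main one for its brevity.
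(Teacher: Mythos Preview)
Your proof is correct and follows essentially the same route as the paper: both reduce the identity to the classical Chu--Vandermonde convolution via the relation $\binom{x;r}{d}=r^{d}\binom{x/r+d-1}{d}$. You are in fact more careful than the paper in explicitly treating the excluded value $r=0$ by the polynomial-identity principle, whereas the paper's one-line reduction tacitly assumes $r\neq 0$.
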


\begin{proof}
This identity can be obtained as a special case of the Chu-Vandemonde convolution formula for ordinary binomial coefficients (see \cite[Eq. (3), p. 84]{G} or \cite[Lemma 7]{N2}):
\begin{equation} \label{eq:binomial-sum-formula-ordinary}
\sum_{v=q}^{p} \binom{x+p-v-1}{p-v} \binom{y+v-q-1}{v-q} =\binom{x+y+p-q-1}{p-q} .
\end{equation}
It suffices to make the change of variables $x\rightarrow x/r$ and $y\rightarrow y/r$ in (\ref{eq:binomial-sum-formula-ordinary}) and use the relation 
\[
\binom{x/r+d-1}{d}=\frac{1}{r^d}\binom{x;r}{d}
\]
to obtain (\ref{eq:binomial-sum-formula}).
\end{proof}

Next, we define a multivariable analog of the Sierpi\'nski matrix.
\begin{definition} \label{de:1}
Let $N$ be a non-negative integer.  Denote $\mathbf{x}_N=(x_0,\ldots, x_{N-1})$ and $\mathbf{r}_N=(r_0,\ldots,r_{N-1})$.  For $N>0$, we define the $N$-variable Sierpi\'nski matrix 
\[
S_{b,N}(\mathbf{x}_N,\mathbf{r}_N)=(\alpha_N(j,k,\mathbf{x}_N,\mathbf{r}_N))
\]
 of dimension $b^N\times b^N$ by
\begin{equation} \label{eq:formula-for-entries-arbitrary-b}
\alpha_N(j,k,\mathbf{x}_N,\mathbf{r}_N)=\left\{
\begin{array}{cl}
\prod_{i=0}^{N-1}\binom{x_i;r_i}{d_i} & 
\begin{array}{l} 
\mathrm{if} \ 0\leq k\leq j \leq b^N-1  \\
\mathrm{and} \  k\preceq_b j; 
\end{array}  \\
\\
0, & 
\begin{array}{l}
\mathrm{otherwise},
\end{array}
\end{array}
\right.
\end{equation}
where $j-k=d_0b^0+d_1b^1+\ldots + d_{N-1}b^{N-1}$ is the base-$b$ expansion of $j-k$.  If $N=0$, we set $S_{b,0}(\mathbf{x}_0,\mathbf{r}_0)=1$.
\end{definition}

The following lemma gives a recurrence for $S_{b,N}(\mathbf{x}_N,\mathbf{r}_N)$.

\begin{lemma} \label{le:multivariable}
The Sierpi\'nski matrix $S_{b,N}(\mathbf{x}_N,\mathbf{r}_N)$ satisfies the recurrence
\begin{equation} \label{eq:le:1}
S_{b,N+1}(\mathbf{x}_{N+1},\mathbf{r}_{N+1})=S_{b,1}(x_N,r_N)\otimes S_{b,N}(\mathbf{x}_N,\mathbf{r}_N),
\end{equation}
where we define
\begin{align*}
S_{b,1}(x,r) & =\left(
\begin{array}{cccll}
1 & 0 & 0 & \cdots & 0 \\
\binom{x;r}{1} & 1 & 0 & \cdots & 0 \\
\binom{x;r}{2} & \binom{x;r}{1} & 1 & \cdots & 0 \\
\vdots & \vdots & \vdots & \ddots  & \vdots \\
\binom{x;r}{b-1} & \binom{x;r}{b-2} & \binom{x;r}{b-3} & \cdots &  1
\end{array}
\right)=
\left\{
\begin{array}{cl}
\binom{x;r}{j-k}, & \textrm{if } 0\leq k \leq j \leq b-1; \\
0, & \mathrm{otherwise}.
\end{array}
\right. \\
\end{align*}
\end{lemma}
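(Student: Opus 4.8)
The plan is to verify the matrix identity (\ref{eq:le:1}) entrywise, exploiting the block structure of the Kronecker product together with the base-$b$ digit decomposition of the row and column indices. Write a typical index $J$ of the $b^{N+1}\times b^{N+1}$ matrix $S_{b,N+1}(\mathbf{x}_{N+1},\mathbf{r}_{N+1})$ as $J=ab^N+j$ with $0\le a\le b-1$ and $0\le j\le b^N-1$, and similarly $K=cb^N+k$. By the definition of the Kronecker product, the $(J,K)$ entry of $S_{b,1}(x_N,r_N)\otimes S_{b,N}(\mathbf{x}_N,\mathbf{r}_N)$ equals $(S_{b,1}(x_N,r_N))_{a,c}\cdot\alpha_N(j,k,\mathbf{x}_N,\mathbf{r}_N)$, so it suffices to show that this product equals $\alpha_{N+1}(J,K,\mathbf{x}_{N+1},\mathbf{r}_{N+1})$ in all cases.

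First I would record the digit bookkeeping. If $j$ has base-$b$ digits $j_0,\dots,j_{N-1}$ and $k$ has digits $k_0,\dots,k_{N-1}$, then the base-$b$ expansion of $J$ is $j_0,\dots,j_{N-1},a$ and that of $K$ is $k_0,\dots,k_{N-1},c$. Consequently $K\preceq_b J$ holds if and only if $k\preceq_b j$ \emph{and} $c\le a$. In that case there is no borrowing in any of the subtractions, so $J-K=(a-c)b^N+\sum_{i=0}^{N-1}(j_i-k_i)b^i$ is already the base-$b$ expansion of $J-K$; that is, writing $J-K=\sum_{i=0}^{N}D_ib^i$ and $j-k=\sum_{i=0}^{N-1}d_ib^i$, we have $D_i=d_i$ for $i<N$ and $D_N=a-c$. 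Substituting into (\ref{eq:formula-for-entries-arbitrary-b}) gives $\alpha_{N+1}(J,K,\mathbf{x}_{N+1},\mathbf{r}_{N+1})=\prod_{i=0}^{N}\binom{x_i;r_i}{D_i}=\binom{x_N;r_N}{a-c}\prod_{i=0}^{N-1}\binom{x_i;r_i}{d_i}$, which is exactly $(S_{b,1}(x_N,r_N))_{a,c}\cdot\alpha_N(j,k,\mathbf{x}_N,\mathbf{r}_N)$.

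It remains to check the entries that vanish. If $K\not\preceq_b J$ then, by the equivalence above, either $c>a$, in which case $(S_{b,1}(x_N,r_N))_{a,c}=0$, or $k\not\preceq_b j$, in which case $\alpha_N(j,k,\mathbf{x}_N,\mathbf{r}_N)=0$; either way the product is $0$, matching $\alpha_{N+1}(J,K,\mathbf{x}_{N+1},\mathbf{r}_{N+1})=0$. One should also observe that $c\le a$ together with $k\preceq_b j$ automatically forces $K\le J\le b^{N+1}-1$, so the size conditions in (\ref{eq:formula-for-entries-arbitrary-b}) introduce no additional cases. The identity for $N=0$ reduces to $S_{b,1}(x_0,r_0)=S_{b,1}(x_0,r_0)\otimes 1$, immediate from the convention $S_{b,0}=1$. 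The only real obstacle here is notational rather than mathematical: one must keep the high-order digit (position $N$), governed by the $S_{b,1}$ factor, cleanly separated from the low-order digits (positions $0,\dots,N-1$), governed by $S_{b,N}$, and track the Kronecker-product indexing convention consistently throughout.
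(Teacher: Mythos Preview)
Your proof is correct and follows essentially the same route as the paper: both verify the identity entrywise by writing the indices as $J=ab^N+j$, $K=cb^N+k$ (the paper calls the block indices $p,q$ and the in-block indices $j',k'$) and identifying the $(J,K)$ entry of $S_{b,N+1}$ as $\binom{x_N;r_N}{a-c}\,\alpha_N(j,k,\mathbf{x}_N,\mathbf{r}_N)$ via the digit decomposition of $J-K$. The paper frames this as an induction on $N$, but the inductive hypothesis is never actually invoked, so its argument is really the same direct block-by-block verification you give.
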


\begin{proof}
We argue by induction on $N$.  The recurrence clearly holds for $N=0$.  Next, assume that the recurrence holds for arbitrary $N$.  We shall prove that the recurrence holds for $N+1$.  Since $S_{N+1}(\mathbf{x}_{N+1},\mathbf{r}_{N+1})$ is a square matrix of size $b^{N+1}$, we can write it as a $b\times b$ matrix of blocks $(A_{p,q})_{0\leq p,q \leq b-1}$, that is 
\begin{align*}
S_{N+1}(\mathbf{x}_{N+1},\mathbf{r}_{N+1})&=\left(\begin{array}{lll}A_{0,0}& \ldots & A_{0,b-1}\\ \vdots & \ddots & \vdots \\ A_{b-1,0}& \ldots & A_{b-1,b-1}\end{array}\right),
\end{align*} 
where each $A_{p,q}$ is a square matrix of size $b^N$. 
Let $A_{p,q}$ be an arbitrary block.  We consider two cases:

\vskip 6pt
\noindent Case 1. $p<q$.  Then by definition of $S_{b,N+1}(\mathbf{x}_N,\mathbf{r}_N)$ we have that $A_{p,q}=0$.

\vskip 6pt
\noindent Case 2. $p\geq q$.  
Let $\alpha_{N+1}(j,k,\mathbf{x}_N,\mathbf{r}_N)$ be an arbitrary entry of $A_{p,q}$.  Then $pb^N\leq j \leq (p+1)b^N-1$ and $qb^N \leq k \leq (q+1)b^N-1$.  Set $j'=j-pb^N$ and $k'=k-qb^N$.  
If $j<k$, then $\alpha_{N+1}(j,k,\mathbf{x}_N,\mathbf{r}_N)=0$ by definition.  Therefore, assume $j\geq k$.  Let $j-k=d_0b^0+d_1b^1+\dots + d_{N}b^{N}$, where $d_N=p-q$.  Then $j'-k'=d_0b^0+d_1b^1+\dots +d_{N-1}b^{N-1}$.  
Since $k \preceq_b j$ if and only if $k' \preceq_b j'$, it follows that
\begin{align*}
\alpha_{N+1}(j,k,\mathbf{x}_{N+1},\mathbf{r}_{N+1}) & =\left\{
\begin{array}{cl}
\prod_{i=0}^{N}\binom{x_i;r_i}{d_i}  & \mathrm{if} \ 0\leq k\leq j \leq b^{N+1}-1 \ \mathrm{and} \  k \preceq_b j;  \\ 
\\
0 & \textrm{otherwise.}
\end{array}
\right. \\
& =\binom{x_N;r_N}{d_N} \alpha_{b,N}(j',k',\mathbf{x}_N,\mathbf{r}_N). \end{align*}
Thus,
\begin{equation}\label{eq:case-2-part-1}
A_{p,q}=\binom{x_N;r_N}{p-q}S_{b,N}(\mathbf{x}_N,\mathbf{r}_N),
\end{equation}
or equivalently, $S_{b,N+1}(\mathbf{x}_{N+1},\mathbf{r}_{N+1})=S_{b,1}(x_N,r_N)\otimes S_{b,N}(\mathbf{x}_N,\mathbf{r}_N)$.
\end{proof}

We now demonstrate that our Sierpi\'nski matrices are multiplicative.
\begin{theorem} \label{th:one-parameter-multivariable} Let $N$ be a non-negative integer.  Then
\begin{equation} \label{eq:one-parameter-multivariable}
S_{b,N}(\mathbf{x}_N,\mathbf{r}_N)S_{b,N}(\mathbf{y}_N,\mathbf{r}_N)=S_{b,N}(\mathbf{x}_N + \mathbf{y}_N,\mathbf{r}_N),
\end{equation}
where we define
\[
\mathbf{x}_N + \mathbf{y}_N=(x_0+y_0,x_1+y_1,\ldots,x_{N-1}+y_{N-1}).
\]
\end{theorem}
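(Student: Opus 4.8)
The plan is to prove the multiplicativity identity~\eqref{eq:one-parameter-multivariable} by induction on $N$, exploiting the Kronecker-product recurrence from Lemma~\ref{le:multivariable} together with the mixed-product property of the Kronecker product, namely $(A\otimes B)(C\otimes D)=(AC)\otimes(BD)$ whenever the ordinary products $AC$ and $BD$ are defined. The base case $N=0$ is trivial since $S_{b,0}=1$, and the case $N=1$ reduces to checking that $S_{b,1}(x,r)S_{b,1}(y,r)=S_{b,1}(x+y,r)$, which is exactly the statement that the $(j,k)$ entry of the product, namely $\sum_{v=k}^{j}\binom{x;r}{j-v}\binom{y;r}{v-k}$, equals $\binom{x+y;r}{j-k}$ --- and this is precisely Lemma~\ref{le:binomial-sum-formula} with $p=j$, $q=k$.

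For the inductive step, I would assume~\eqref{eq:one-parameter-multivariable} holds for $N$ and compute, using Lemma~\ref{le:multivariable} twice,
\[
S_{b,N+1}(\mathbf{x}_{N+1},\mathbf{r}_{N+1})S_{b,N+1}(\mathbf{y}_{N+1},\mathbf{r}_{N+1})
=\bigl(S_{b,1}(x_N,r_N)\otimes S_{b,N}(\mathbf{x}_N,\mathbf{r}_N)\bigr)\bigl(S_{b,1}(y_N,r_N)\otimes S_{b,N}(\mathbf{y}_N,\mathbf{r}_N)\bigr).
\]
Applying the mixed-product property, this equals
\[
\bigl(S_{b,1}(x_N,r_N)S_{b,1}(y_N,r_N)\bigr)\otimes\bigl(S_{b,N}(\mathbf{x}_N,\mathbf{r}_N)S_{b,N}(\mathbf{y}_N,\mathbf{r}_N)\bigr)
=S_{b,1}(x_N+y_N,r_N)\otimes S_{b,N}(\mathbf{x}_N+\mathbf{y}_N,\mathbf{r}_N),
\]
where the first tensor factor is simplified by the $N=1$ case and the second by the inductive hypothesis. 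One more application of Lemma~\ref{le:multivariable} identifies the right-hand side as $S_{b,N+1}(\mathbf{x}_{N+1}+\mathbf{y}_{N+1},\mathbf{r}_{N+1})$, completing the induction.

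The only genuine content, and thus the main obstacle, is the $N=1$ verification: one must show that the entrywise formula for the product of two lower-triangular $b\times b$ Sierpi\'nski blocks collapses via the Chu--Vandermonde-type convolution in Lemma~\ref{le:binomial-sum-formula}. Here I would be careful to note that the sum $\sum_{v}\binom{x;r}{j-v}\binom{y;r}{v-k}$ coming from ordinary matrix multiplication ranges over all $v$ with $0\le k\le v\le j\le b-1$, because the vanishing of $S_{b,1}$ entries outside the triangular region automatically restricts the summation index to $k\le v\le j$, matching the hypotheses of Lemma~\ref{le:binomial-sum-formula}. Everything else --- the base cases, the bookkeeping of subscripts $\mathbf{x}_N$ versus $\mathbf{x}_{N+1}$, and the invocation of the mixed-product property --- is routine. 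It is worth remarking that the mixed-product property requires no commutativity or invertibility assumptions, only that the relevant matrix dimensions match, which they do since all factors are $b^N\times b^N$ or $b\times b$.
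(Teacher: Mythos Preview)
Your proposal is correct and follows essentially the same approach as the paper: induction on $N$, with the $N=1$ case handled via Lemma~\ref{le:binomial-sum-formula} and the inductive step carried out using Lemma~\ref{le:multivariable} together with the mixed-product property of the Kronecker product. The paper's argument is virtually identical in structure and detail.
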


\begin{proof}
We argue by induction on $N$.  By definition, (\ref{eq:one-parameter-multivariable}) clearly holds for $N=0$.  For $N=1$, let $\beta(j,k)$ denote the $(j,k)$-entry of $T=S_{b,1}(\mathbf{x}_1,\mathbf{r}_1)S_{b,1}(\mathbf{y}_1,\mathbf{r}_1)$.  Since $T$ is lower-triangular, we have that $\beta(j,k)=0$ if $j<k$.  Therefore, we assume $j\geq k$.  Then 
\begin{equation}
\beta(j,k)=\sum_{i=k}^{j} \binom{x_0;r_0}{j-i}\binom{y_0;r_0}{i-k}=\binom{x_0+y_0;r_0}{j-k},
\end{equation}
which follows from Lemma \ref{le:binomial-sum-formula}.
Thus, 
$$S_{b,1}(\mathbf{x}_1,\mathbf{r}_1)S_{b,1}(\mathbf{y}_1,\mathbf{r}_1)=
S_{b,1}(\mathbf{x}_1 + \mathbf{y}_1,\mathbf{r}_1),$$
which shows that (\ref{eq:one-parameter-multivariable}) holds for $N=1$.  

Next, assume that (\ref{eq:one-parameter-multivariable}) holds for arbitrary $N$.  We intend to prove that (\ref{eq:one-parameter-multivariable}) holds for $N+1$. By Lemma \ref{le:multivariable} and the mixed-property of a Kronecker product, we have 
\begin{align*}
&S_{b,N+1}(\mathbf{x}_{N+1},\mathbf{r}_{N+1})S_{b,N+1}(\mathbf{y}_{N+1},\mathbf{r}_{N+1})\\
&\qquad=(S_{b,1}(x_N,r_N)\otimes S_{b,N}(\mathbf{x}_N,\mathbf{r}_N))(S_{b,1}(y_N,r_N)\otimes S_{b,N}(\mathbf{y}_N,\mathbf{r}_N))\\
&\qquad=(S_{b,1}(x_N,r_N)S_{b,1}(y_N,r_N))\otimes (S_{b,N}(\mathbf{x}_N,\mathbf{r}_N)S_{b,N}(\mathbf{y}_N,\mathbf{r}_N)).
\end{align*}
Hence, by the induction hypothesis and Lemma \ref{le:multivariable} again, we obtain 
\begin{align*}
&S_{b,N+1}(\mathbf{x}_{N+1},\mathbf{r}_{N+1})S_{b,N+1}(\mathbf{y}_{N+1},\mathbf{r}_{N+1})\\
&\qquad=S_{b,1}(x_N+y_N,r_N)\otimes S_{b,N}(\mathbf{x}_N+\mathbf{y}_N,\mathbf{r}_N) \\
&\qquad= S_{b,N+1}(\mathbf{x}_{N+1}+\mathbf{y}_{N+1},\mathbf{r}_{N+1}).
\end{align*}
This proves that (\ref{eq:one-parameter-multivariable}) holds for $N+1$.
\end{proof}

\begin{proof}[Proof of Theorem \ref{th:digital-binomial-theorem-multivariable}]
We equate the matrix entries at position $(n,0)$ in both sides of (\ref{eq:one-parameter-multivariable}) to obtain
\[
 \sum_{0\leq m\preceq_b n}\left(\prod_{i=0}^{N-1}\binom{x_i;r_i}{n_i-m_i} \prod_{i=0}^{N-1}\binom{y_i;r_i}{m_i} \right) = \prod_{i=0}^{N-1}\binom{x_i+y_i;r_i}{n_i}.
\]
It remains to switch the roles of $x$ and $y$ to obtain (\ref{eq:digital-binomial-theorem-multivariable}) as desired.
\end{proof}

\begin{proof}[Proof of Theorem \ref{th:q-digital-binomial-theorem}]
We set $b=2$, $x_i=x$, $y_i=q^i y$, and $r_i=1$ in (\ref{eq:digital-binomial-theorem-multivariable}) to obtain
\[
 \prod_{i=0}^{N-1}\binom{x+q^i y+ n_i-1}{n_i} = \sum_{\substack{0\leq m \leq n \\  (m,n-m) \
\text{\rm carry-free}}}\left(\prod_{i=0}^{N-1}\binom{x+m_i-1}{m_i} \prod_{i=0}^{N-1}\binom{q^iy+n_i-m_i-1}{n_i-m_i} \right) .
\]
Observe that since the digits $m_i$ and $n_i$ can only take on the values 0 or 1 with $m_i\leq n_i$, we have
\[
\prod_{i=0}^{N-1}\binom{x+m_i-1}{m_i} =x^{\sum_{i=0}^{N-1} m_i} = x^{s(m)}
\]
and 
\[
\prod_{i=0}^{N-1}\binom{q^iy+n_i-m_i-1}{n_i-m_i}=q^C y^{s(n-m)},
\]
where
\[
C=\sum_{i=0}^{N-1}i(n_i-m_i)=z_n(m).
\]
It follows that
\[
 \prod_{i=0}^{N-1}\binom{x+q^i y+ n_i-1}{n_i} = \sum_{\substack{0\leq m \leq n \\  (m,n-m) \
\text{\rm carry-free}}}q^{z_n(m)} x^{s(m)}y^{s(n-m)} .
\]
This proves (\ref{eq:q-digital-binomial-theorem}).
\end{proof}

We conclude by observing that many other variations of equation (\ref{eq:q-digital-binomial-theorem-special-case}) can be obtained by making appropriate substitutions for the variables in Theorems \ref{eq:q-digital-binomial-theorem} and \ref{th:digital-binomial-theorem-multivariable}.  For example, equation (\ref{eq:q-digital-binomial-theorem-special-case}) can be extended to obtain a $p,q$-analog of the digital binomial theorem by replacing $y$ with $yp^{1-N}$, $q$ with $pq$, and then multiplying through by $p^{(N-1)N/2}$:
\[
(p^{N-1}x+y)(p^{N-2}x+qy)\cdots (x+q^{N-1}y)
=\sum_{m=0}^n p^{w_n(m)}q^{z_n(m)}x^{s(m)}y^{s(n-m)}.
\]
Here, 
\[
w_n(m)=\sum_{k=0}^{N-1}(N-k-1)\delta(n_k,m_k) = (N-1)s(m) - z_n(n-m)
\]
is a reverse-weighted sum of the positions of the digit 1 in the $N$-bit binary expansion of $m$.

\end{document}